\documentclass{article}

\usepackage{amsthm} 
\usepackage{amsmath}
\usepackage{hyperref}
\usepackage{graphicx}
\usepackage{fancyhdr}
\usepackage[utf8]{inputenc}
\usepackage[T1]{fontenc}
\usepackage{tikz}

\newtheorem{theorem}{Theorem}[section]  

\newtheorem{corollary}[theorem]{Corollary}

\theoremstyle{definition}
\newtheorem{definition}[theorem]{Definition}

\usepackage{PRIMEarxiv}
\usepackage[utf8]{inputenc} 
\usepackage[T1]{fontenc}    
\usepackage{hyperref}       
\usepackage{url}            
\usepackage{booktabs}       
\usepackage{amsfonts}       
\usepackage{nicefrac}       
\usepackage{microtype}      
\usepackage{lipsum}
\usepackage{fancyhdr}       
\usepackage{graphicx}       
\graphicspath{{media/}}     

\title{Average Steps Until Absorption on Random Walks on Sea Dragon Trees}

\author{
  John Estes* \\
  Belhaven University \\
  \texttt{jestes@belhaven.edu}
  \and
  \textbf{Tyler Jackson} \\
  Belhaven University \\
  \texttt{tylerjjackson@students.belhaven.edu}
  \AND 
  Zachary Chapman \\
  Belhaven University \\
  \texttt{zack@chapfam.net}
  \and
  \textbf{Lillian Ates} \\
  Belhaven University \\
  \texttt{lillianates@students.belhaven.edu}
}

\begin{document}
\maketitle

\begin{abstract}
For a graph $G$ and vertices $u,v$, we define the ASUA of $v$, $t(G,v,u)$, to be the average steps until absorption along a random walk terminating at $u$. We define a sea dragon to be a tree with a unique path $P$ such that if $d(u) \geq 3$ for some vertex $u$, then $u \in V(P)$. We use Markov chains to determine $t(G,v,u)$ for all vertices of several classes of sea dragons, a broad subclass of trees. Additionally, we give several results on equations related to ASUAs on general graphs. 
\end{abstract}

\keywords{MCS 05C81 Random Walks on Graphs \and MSC 05C05 Trees}

\section{Introduction to Random Walks on Graphs}

A common reinforcement learning application focuses on assisting an AI agent in navigating a grid world to find the optimal path from one location to another \cite{barto, russell}. Consider an autonomous agent in maze, but instead of learning an optimal policy, the agent traverses through the maze challenged to make decisions at each juncture randomly. We will address the following question: On average, how long does it take for the agent to traverse the maze?

\input{tikz/maze}

\par Such a maze is represented as a simple, undirected graph $G = (V,E)$ with a vertex at each grid coordinate as pictured in Figure 1. For vertices $u, v$, $G$ has edge $e =uv$ if the robot can traverse directly from $u$ to $v$. For $v \in V$ and $S \subseteq G$,  we denote $N(v)$ to be the neighborhood of $v$, $d(v) = |N(v)|$ to be the degree, and $G[S]$ to be the subgraph of $G$ induced by $S$.

\par Let $W = \{v,\ldots,u\}$ be a random walk with absorbing vertex $u$ on $G$ starting at $v$, representing the agent's walk through the maze. We define the ``ASUA of $v$" to be the average steps until absorption starting at $v$ denoted as $t(G,v,u)$. If context is implied, we will denote $t(G,v,u)$ as $t(v)$. In this paper, we will calculate $t(G,v,u)$ for several classes of graphs through use of Markov chains.

\subsection{Properties of Markov Chains and Absorbing Points}

Markov chains model stochastic behavior of multiple states over multiple transitions. This model is built with a transition matrix $T = [p_{i,j}]$ where $p_{i,j}$ is the probability that state $i$ can transition to state $j$ in one transition. This model is useful as $T^k$ yields probabilities of states after $k$ transitions. We refer the reader to Grinstead and Snell \cite{grinstead}.

It is commonly known, that in Markov chains with absorbing states (referred to as absorbing Markov chains), the transition matrix, $T$, can be written in canonical form as

\[
T = 
\begin{bmatrix}
  Q & R \\
  0 & I
\end{bmatrix}.
\]

The sub-matrix, $Q$, represents the transitions of transient states, and we seek to find $\lim_{k \rightarrow \infty}Q^k$. It can be shown this limit converges to $N = (I - Q)^{-1}$ \cite{cheteyan}. Let $[1]$ be a column vector all 1's and $[t]$ be a column vector where $t_i$ is the ASUA at state $i$. It is commonly known that $N \cdot [1] = [t]$.

In 1994, Lovasz showed that, for a random walk on G, $T = D \cdot A$, where $D = [d_{i,j}]$ is a diagonal matrix such that $d_{i,i} = 1/d(v_i)$ for vertex $i$ and 0 otherwise, and $A$ is the adjacency matrix of $G$ \cite{lovasz}. . 

As an example, for the graph $G$ shown in Figure \ref{c4u}, we demonstrate matrices $D$, $A$, and $T = D \cdot A$ in Figure \ref{fig:c4umatrix}. The ASUAs of vertices $v_1, v_2, v_3$ and $v_4$ are 13, 14, 10, and 13 respectively, shown in Figure \ref{fig:c4N}.

\begin{figure}[h]
\begin{center}
\includegraphics[width=6in]{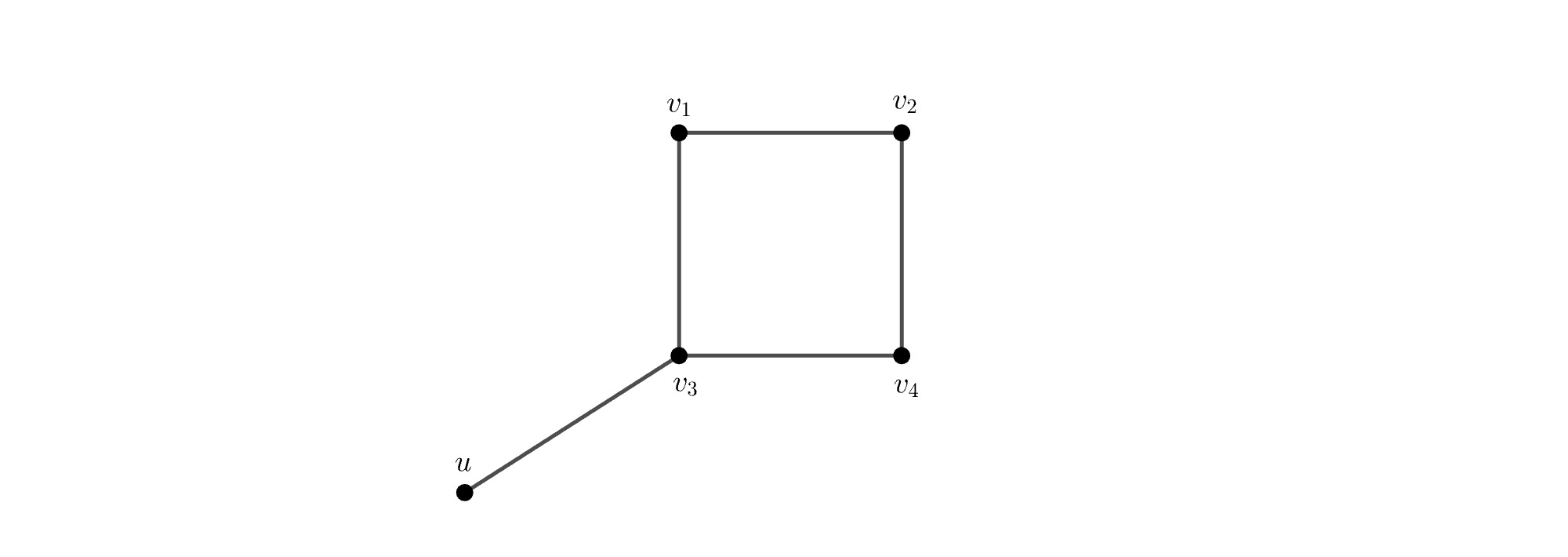} 
\caption{Introductory Example Graph $G$}
\label{c4u}
\end{center}
\end{figure}

\begin{figure}[h]
    \centering
    \[
    D = 
    \begin{bmatrix}
    \frac{1}{2} & 0 & 0 & 0 & 0 \\
    0 & \frac{1}{2} & 0 & 0 & 0 \\
    0 & 0 & \frac{1}{3} & 0 & 0 \\
    0 & 0 & 0 & \frac{1}{2} & 0 \\
    0 & 0 & 0 & 0 & 1 
    \end{bmatrix}
    \quad 
    A = 
    \begin{bmatrix}
    0 & 1 & 1 & 0 & 0 \\
    1 & 0 & 0 & 1 & 0 \\
    1 & 1 & 0 & 0 & 1 \\
    0 & 1 & 1 & 0 & 0 \\
    0 & 0 & 0 & 0 & 1 
    \end{bmatrix}
    \quad 
    T = D \cdot A = 
    \begin{bmatrix}
    0 & \frac{1}{2} & \frac{1}{2} & 0 & 0 \\
    \frac{1}{2} & 0 & 0 & \frac{1}{2} & 0 \\
    \frac{1}{3} & 0 & 0 & \frac{1}{3} & \frac{1}{3} \\
    0 & \frac{1}{2} & \frac{1}{2} & 0 & 0 \\
    0 & 0 & 0 & 0 & 1 
    \end{bmatrix}
    \]
    \caption{The Diagonal Matrix $D$, Adjacency Matrix $A$, and Transition Matrix $T$ of $G$}
    \label{fig:c4umatrix}
\end{figure}

\begin{figure}[h]
    \centering
    \[
    N = (I - Q)^{-1} =
    \begin{bmatrix}
    4 & 4 & 3 & 2\\
    \frac{7}{2} & 5 & 3 & \frac{5}{2} \\
    \frac{5}{2} & 3 & 3 & \frac{3}{2} \\
    3 & 4 & 3 & 3
    \end{bmatrix}
    \quad 
    [t] = 
    \begin{bmatrix}
    13  \\
    14 \\
    10 \\
    13 
    \end{bmatrix}    
    \]
    
    \caption{The Matrix $N = (I-Q)^{-1}$, and ASUA Matrix $[t]$ of $G$}
    \label{fig:c4N}
\end{figure}

\subsection{Our Proof Method}

\noindent For a given graph, calculating $N = (I - Q)^{-1}$ is a standard operation, but calculating $N$ for a class of graphs is not a direct exercise. However, $N^{-1} = I - Q$ is rather straightforward, and since $N$ is an invertible matrix, the equation $N \cdot [1] = [t]$ has a unique solution as does $N^{-1} \cdot [t] = [1]$. 

\noindent For Theorems \ref{thm:sd1} and \ref{thm:sd4}, we will provide a potential solution $[t]$ and show that this potential solution satisfies $N^{-1} \cdot [t] = [1]$ in all cases. Since there is a unique solution to this equation, our proposed solution is the true solution.

\noindent In Section \ref{sec:asuaeqs}, we demonstrate how $N^{-1} \cdot [t] = [1]$ leads to a set of constraint equations with variables $t(v_1), t(v_2), \ldots, t(v_{n-1})$ that a potential solution must satisfy. We refer to these equations as \emph{ASUA equations}.
\section{Preliminary Results of ASUA Equations}
\label{sec:asuaeqs}

\subsection{ASUAs of Neighbors}

\noindent Before presenting our main results, we identify several important results that we will rely on in subsequent sections. We define, for vertex set $S \subseteq V$, $t_{\mu}(S)$ to be the average of ASUAs of vertices of $S$. That is, 
\[t_{\mu}(S) = \dfrac{1}{|S|}\left( \sum_v t(G,v,u) \right),\]

where $u$ is an absorbing vertex for a random walk on $G$. Approaching ASUAs through use of $t_{\mu}(N(v))$ allows us to utilize graph structures to calculate $[t]$ more directly.

\begin{theorem}
\label{thm_mu}
Let $G$ be a graph and $v, u \in V(G)$, and let $t(v) = T(G,v,u)$.  Then \[t(v) = t_{\mu}(N(v)) + 1.\]
\end{theorem}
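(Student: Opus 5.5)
Here $v \neq u$ is implicit, and we use the convention $t(u) = 0$: the walk is already absorbed at $u$, and a neighbor of $v$ equal to $u$ contributes $0$ to the average. The plan is to read the statement off the $v$-th row of the matrix identity $N^{-1}\cdot[t] = [1]$ from the previous subsection, with $N^{-1} = I - Q$. This is the same identity the paper uses to define ASUA equations, so the theorem amounts to writing one such equation explicitly.

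\textbf{Step 1: identify the entries of $Q$.} By Lovasz's description $T = D\cdot A$, for transient states $v_i, v_j$ the entry of $Q$ is $q_{i,j} = 1/d(v_i)$ if $v_j \in N(v_i)$ and $0$ otherwise. The transition from $v_i$ to $u$ lives in $R$, not in $Q$, so it does not appear in row $i$ of $Q$. Consequently row $i$ of $I - Q$ has a $1$ on the diagonal (there are no loops, since $G$ is simple), and $-1/d(v_i)$ in each column indexed by a transient neighbor of $v_i$.

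\textbf{Step 2: write out row $i$ of $(I-Q)[t] = [1]$.} This row reads
\[
t(v_i) - \frac{1}{d(v_i)}\sum_{w \in N(v_i)\setminus\{u\}} t(w) = 1 .
\]
Since $t(u) = 0$, we may add the term for $u$ back into the sum when $u \in N(v_i)$ without changing its value. The sum then runs over all of $N(v_i)$, and dividing by $d(v_i) = |N(v_i)|$ gives exactly $t_{\mu}(N(v_i))$. Rearranging yields $t(v_i) = t_{\mu}(N(v_i)) + 1$.

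As a sanity check, the same equation follows from first-step analysis. From $v$, the walk takes one step to a uniformly random neighbor $w$ and then needs $t(w)$ more steps in expectation. Conditioning on the first step therefore gives $t(v) = 1 + \frac{1}{d(v)}\sum_{w\in N(v)} t(w)$, which confirms the formula.

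\textbf{Main obstacle.} The only delicate point is bookkeeping rather than mathematics. One must handle the absorbing neighbor through the convention $t(u)=0$, since $u$'s column is excluded from $Q$. One must also make sure the average $t_\mu$ is taken over all $d(v)$ neighbors, not only the transient ones. Once that convention is stated, the proof is immediate.
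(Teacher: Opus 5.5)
Your proof is correct and takes the same route as the paper: both read the identity off the row of $I-Q$ indexed by $v$ in $(I-Q)[t]=[1]$. Your convention $t(u)=0$ for an absorbing neighbor, whose column lies in $R$ rather than $Q$, handles a bookkeeping point the paper's proof passes over silently.
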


\begin{proof}
    Let $d(v) = k$ and $N(v) = \{v_1, \ldots, v_k\}$. Then the row corresponding to $v$, $r_v$, of $N^{-1} = I - Q$ has entry 1 at the index corresponding to $v$, $-\frac{1}{k}$ for entries corresponding to $v_i$ for all $i$, and 0 elsewhere. 

    Thus $r_v\cdot t^T = t(v) - \frac{1}{k}(t(v_1) + t(v_2) + \ldots + t(v_k)) = 1.$ Hence $t(v) = t_{\mu}(N(v)) + 1$.
\end{proof}

Theorem \ref{thm_mu}, relates $t(v)$ to ASUAs of other vertices. We refer to this characterization as the ASUA equation of $t(v)$. If $d(v) = 1$, we can quickly determine the ASUA equation $t(v)$.

\begin{corollary}
\label{cor_degreeone}
    Let $N(u) = \{v\}$ and $t(v) = T(G,v,x)$. Then $t(u) = t(v) + 1$.
\end{corollary}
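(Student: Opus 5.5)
The plan is to apply Theorem \ref{thm_mu} directly to the degree-one vertex $u$, with $x$ as the absorbing vertex; we take $u \neq x$, which is implicit in the statement, since otherwise $t(u)=0$ and the ASUA equation of $u$ is not defined. Because $u$ is then a transient vertex of the walk, its row in $N^{-1} = I - Q$ is well defined. Theorem \ref{thm_mu} gives
\[
t(u) = t_{\mu}(N(u)) + 1 .
\]

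Next we evaluate the average on the right. Since $N(u) = \{v\}$, we have $|N(u)| = 1$, so
\[
t_{\mu}(N(u)) = \frac{1}{1}\, t(v) = t(v).
\]
Substituting this into the ASUA equation yields $t(u) = t(v) + 1$, as claimed.

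The only point needing care is the case $v = x$, that is, when the unique neighbor of $u$ is the absorbing vertex. In the matrix formulation, the column for $x$ is absent from $Q$, so the row of $I-Q$ for $u$ is just $1$ in position $u$. This gives $t(u) = 1$, which agrees with $t(x) + 1$ under the natural convention $t(x) = 0$.

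So the argument is a one-line specialization of Theorem \ref{thm_mu}. There is no real obstacle beyond stating this convention explicitly, so that the formula $t_{\mu}(N(v)) + 1$ remains valid when an absorbing neighbor contributes $0$ to the average.
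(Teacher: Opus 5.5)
Your proof is correct and takes the same approach as the paper, which gives no separate proof and treats the corollary as an immediate specialization of Theorem \ref{thm_mu} to a vertex with $|N(u)|=1$. Your explicit handling of the caveats ($u \neq x$, and $t(x)=0$ when $v = x$) is a sensible addition that the paper leaves implicit.
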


Similarly,
\begin{corollary}
\label{cor_sd1}
    Let $v \in V$ such that $N(v) = \{x,y,u\}$ where $d(u) = 1$. Then $t(v) = \frac{1}{2}(t(x) + t(y)) + 2$.
\end{corollary}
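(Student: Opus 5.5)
The plan is to combine Theorem \ref{thm_mu} applied at $v$ with Corollary \ref{cor_degreeone} applied at the pendant vertex $u$, and then eliminate $t(u)$. Throughout, the absorbing vertex is assumed to be distinct from $v$ and from the leaf $u$. This is the setting in which the statement is meaningful: if $u$ were absorbing, then $t(u)=0$ and the formula would fail.

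\textbf{Step 1: the ASUA equation at $v$.} Since $N(v)=\{x,y,u\}$ and $d(v)=3$, Theorem \ref{thm_mu} gives
\[
t(v) = \tfrac{1}{3}\bigl(t(x)+t(y)+t(u)\bigr) + 1.
\]

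\textbf{Step 2: the ASUA of the leaf.} Since $d(u)=1$ and the unique neighbor of $u$ is $v$, Corollary \ref{cor_degreeone} gives $t(u) = t(v)+1$. This is Theorem \ref{thm_mu} at $u$, where the neighbor average is just $t(v)$.

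\textbf{Step 3: eliminate $t(u)$.} Substituting Step 2 into Step 1 gives
\[
t(v) = \tfrac{1}{3}\bigl(t(x)+t(y)\bigr) + \tfrac{1}{3}t(v) + \tfrac{4}{3}.
\]
Multiplying by $3$ gives $3t(v) = t(x)+t(y)+t(v)+4$, so $2t(v) = t(x)+t(y)+4$. Hence $t(v) = \frac{1}{2}(t(x)+t(y)) + 2$, as claimed.

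The only real obstacle is the bookkeeping about the absorbing vertex. Theorem \ref{thm_mu} holds only at transient vertices, so I would state explicitly that $v$ and $u$ are non-absorbing. I would also note that $x$ or $y$ may be the absorbing vertex, in which case its ASUA is $0$ and the identity still holds with that convention. Beyond that, the argument is a direct two-line elimination.
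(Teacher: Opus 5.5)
Your proof is correct and follows the paper's argument exactly: you apply Theorem \ref{thm_mu} at $v$, substitute $t(u)=t(v)+1$ from Corollary \ref{cor_degreeone}, and solve $3t(v)=t(x)+t(y)+t(v)+4$. Your remark that $v$ and $u$ must be non-absorbing is a worthwhile clarification that the paper leaves implicit.
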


\begin{proof}
    From Theorem \ref{thm_mu} and Corollary \ref{cor_degreeone}, 
    \[t(v) = \frac{1}{3}(t(x) + t(y) + t(u)) + 1, \text{\hspace{.5in}} t(u) = t(v) + 1.\]

Thus, $3t(v) = t(x) + t(y) + t(u) + 3 = t(x) + t(y) + (t(v) + 1) + 3$. Solving for $t(v)$, we obtain $t(v) = \frac{1}{2}(t(x) + t(y)) + 2$.
    
\end{proof}

\subsection{ASUAs of Stems}

\noindent Suppose $U = \{u_1, u_2, \ldots, u_l\} \subseteq V(G)$ such that $G[U]$ is a path, $d(u_1) = 1$, $d(u_i) = 2$ for $2 \leq i \leq l$, and $N(u_l) = \{u_{l-1},v \}$. We say that $U$ is a \emph{stem attached to $v$}. Then we can state one more corollary from Theorem \ref{thm_mu}.

\begin{theorem}
\label{thm_stems}
\noindent Let $G$ be a graph such that $\{u_1, u_2, \ldots, u_l\}$ is a stem attached to $v$. Also let $t(u) = T(G,u,x)$ and for convention $v = u_{l+1}$. Then $t(u_j) = t(v) + l^2 - (j-1)^2$ for $1 \leq j \leq l + 1$.
\end{theorem}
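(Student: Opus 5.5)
We will prove the statement by working along the stem from the leaf $u_1$ toward $v=u_{l+1}$. The only inputs are Theorem \ref{thm_mu} at each stem vertex and Corollary \ref{cor_degreeone} at the leaf. Throughout we assume the absorbing vertex $x$ is not one of $u_1,\ldots,u_l$, so every stem vertex is transient and its ASUA equation holds. The formula is also trivially correct for $j=l+1$, since there it reads $t(v)=t(v)+l^2-l^2$.

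\textbf{Step 1: consecutive differences.} Set $\delta_j = t(u_j)-t(u_{j+1})$ for $1\le j\le l$. We claim $\delta_j = 2j-1$, and prove it by induction on $j$.
\begin{itemize}
\item For $j=1$: since $d(u_1)=1$ and $N(u_1)=\{u_2\}$, Corollary \ref{cor_degreeone} gives $t(u_1)=t(u_2)+1$. When $l=1$ we have $u_2=v$, so this reads $t(u_1)=t(v)+1$. Hence $\delta_1=1$.
\item For $2\le j\le l$: the vertex $u_j$ has degree $2$ with $N(u_j)=\{u_{j-1},u_{j+1}\}$, where $u_{l+1}=v$ by convention. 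Theorem \ref{thm_mu} gives
\[2t(u_j) = t(u_{j-1}) + t(u_{j+1}) + 2.\]
Rearranging yields $t(u_j)-t(u_{j+1}) = t(u_{j-1})-t(u_j) + 2$, that is, $\delta_j=\delta_{j-1}+2$.
\end{itemize}
Together these give $\delta_j = 2j-1$.

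\textbf{Step 2: telescoping.} For $1\le j\le l+1$,
\[t(u_j)-t(v) = \sum_{i=j}^{l}\delta_i = \sum_{i=j}^{l}(2i-1) = l^2-(j-1)^2,\]
using that the sum of the first $m$ odd numbers is $m^2$. The empty sum at $j=l+1$ matches the trivial case above.

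\textbf{Main obstacle.} The only real subtlety is bookkeeping at the ends of the stem. For $j=l$ the neighbour $u_{l+1}$ is the external vertex $v$, and we only need $t(v)$ as an unknown, not its own ASUA equation; this is consistent because $d(u_l)=2$ with $N(u_l)=\{u_{l-1},v\}$. The degenerate case $l=1$ is handled entirely by Corollary \ref{cor_degreeone}.

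Alternatively, in the spirit of the paper's proof method, one could verify the proposed values directly in the ASUA equations. Using the identity $(j-2)^2+j^2-2(j-1)^2=2$, the formula satisfies $t(u_j)=\tfrac12\bigl(t(u_{j-1})+t(u_{j+1})\bigr)+1$ for $2\le j\le l$, and it also satisfies the leaf equation $t(u_1)=t(u_2)+1$. The stem's equations are a chain that determines each $t(u_j)$ uniquely from $t(v)$, so the formula is the solution.
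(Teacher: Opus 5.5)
Your proof is correct and follows essentially the same route as the paper. Both arguments walk from the leaf $u_1$ toward $v$, using Corollary \ref{cor_degreeone} at the leaf and Theorem \ref{thm_mu} at each degree-2 stem vertex; the paper inducts directly on $t(u_1)-t(u_j)=(j-1)^2$, while you induct on the consecutive differences $\delta_j=2j-1$ and then telescope. Your difference formulation is slightly cleaner, since it makes the recurrence first-order, whereas the paper's inductive step at $a$ quietly uses both cases $a$ and $a-1$ and nominally starts at $a=1$, where $u_{a-1}$ is undefined.
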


\begin{proof}
By Theorem \ref{thm_mu}, $t(u_1) = t(u_2) + 1$ and $t(u_j)  = \frac{1}{2}(t(u_{j-1} + t(u_{j+1})) + 1$ for $2 \leq j \leq l$.  

\noindent We will prove, by induction, that $t(u_1) = t(u_j) + (j-1)^2$ with $t(u_1) = t(u_1) + (1-1)^2$ and $t(u_1) = t(u_2) + (2-1)^2$ confirming the base cases.

\noindent Suppose that for some $a$, $1 \leq a \leq l$, $t(u_1) = t(u_a) + (a-1)^2$. Then,

\begin{align*}
    t(u_1) & = t(u_a) + (a-1)^2 \\
        & =  \frac{1}{2}(t(u_{a-1}) + t(u_{a+1})) + 1 + (a-1)^2 \\
    2t(u_1) & = t(u_{a-1}) + t(u_{a+1}) + 2 + 2a^2 -4a + 2 \\
            & = (t(u_1) - (a-2)^2) + t(u_{a+1}) + 2a^2 - 4a + 4 \\
    t(u_1) & = t(u_{a+1}) + a^2.
\end{align*}

\noindent Thus $t(u_1) = t(u_j) + (j-1)^2$ for $1 \leq j \leq l$. To complete the proof, note that $t(v) = t(u_{l+1}) = t(u_1) - l^2$, and so $t(u_j) =  t(u_1) - (j-1)^2 = (t(v) + l^2) - (j-1)^2$.

\end{proof}

\begin{corollary}
\label{cor_sd4}
    Let $N(v) = \{x,y\}\cup\{u_1,\ldots,u_r\}$, and for $1\leq i \leq r$, let $S_i$ be a stem of length $c_i$ attached to $v$ via $u_i$. Let $c_1 + c_2 + \ldots c_r = d$. Then 
    \[t(v) = \dfrac{1}{2}(t(x) + t(y)) + (d+1).\]
\end{corollary}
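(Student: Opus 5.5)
We start from the ASUA equation of $v$ given by Theorem \ref{thm_mu}, and use Theorem \ref{thm_stems} to rewrite every stem neighbour of $v$ in terms of $t(v)$. Since $d(v) = r+2$, Theorem \ref{thm_mu} gives
\[(r+2)\,t(v) = t(x) + t(y) + \sum_{i=1}^{r} t(u_i) + (r+2).\]
Each $u_i$ is the vertex of the stem $S_i$ adjacent to $v$. In the notation of Theorem \ref{thm_stems}, with $l = c_i$, it is the vertex $u_{c_i}$ of that stem, so $j = c_i$. Hence
\[t(u_i) = t(v) + c_i^2 - (c_i - 1)^2 = t(v) + 2c_i - 1.\]

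Substituting this into the ASUA equation gives
\[(r+2)\,t(v) = t(x) + t(y) + r\,t(v) + \sum_{i=1}^{r}(2c_i - 1) + (r+2).\]
The sum equals $2d - r$. Cancelling $r\,t(v)$ from both sides leaves
\[2t(v) = t(x) + t(y) + 2d + 2,\]
that is, $t(v) = \frac{1}{2}(t(x)+t(y)) + (d+1)$, as claimed. The case $r=1$, $c_1=1$ recovers Corollary \ref{cor_sd1}, which is a useful sanity check.

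The main point needing care is the index bookkeeping in Theorem \ref{thm_stems}. The stem is labelled from the leaf $u_1$ up to $u_l$, and $u_l$ is the vertex adjacent to $v$. I therefore take the attaching vertex $u_i$ in the corollary to be the top vertex $u_{c_i}$ of its stem, not the leaf. Getting this backwards would produce $c_i^2$ instead of $2c_i - 1$.

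We must also check that the hypotheses of Theorem \ref{thm_stems} hold for each stem separately. They do: each $S_i$ is a stem attached to $v$, and the absorbing vertex does not lie on any stem, since $x$ and $y$ are the only other neighbours of $v$. The argument would break if the absorbing vertex lay in a stem, so we assume it does not, as is implicit in the statement.
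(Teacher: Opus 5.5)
Your proposal is correct and follows the paper's proof essentially line for line: you write the ASUA equation of $v$ from Theorem \ref{thm_mu} with $d(v)=r+2$, substitute $t(u_i)=t(v)+2c_i-1$ from Theorem \ref{thm_stems} at $j=l=c_i$, and cancel $r\,t(v)$ to get $2t(v)=t(x)+t(y)+2d+2$. Your assumption that the absorbing vertex is neither $v$ nor on any stem is also implicit in the paper; it is a genuine extra hypothesis rather than a consequence of $x,y$ being the only other neighbours of $v$.
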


\begin{proof}
    From Theorem \ref{thm_mu}, \ref{thm_stems}, and Corollary \ref{cor_degreeone}, 
    \[t(v) = \frac{1}{r+2}(t(x) + t(y) + \sum_{i=1}^r t(u_i)) + 1, \text{\hspace{.5in}} t(u_i) = t(v) + 2c_i  - 1.\]

Thus we have 
\begin{align*}
    (r+2)t(v) = & t(x) + t(y) + \sum_{i=1}^r t(u_i) + r + 2 \\
                = & t(x) + t(y) + rt(v) + 2d - r + r + 2 \\
    2t(v) = & t(x) + t(y) + 2d + 2 \\
    t(v) = & \frac{1}{2}(t(x) + t(y)) + (d+1).
\end{align*}
    
\end{proof}

\section{Paths and ASUAs of Cycles}

Theorem \ref{thm_stems} actually helps us determine ASUAs for vertices on the path graph on $n$ vertices, $P_n$.

\begin{corollary}
    Let $P_n$ be a path graph with vertices $\{v_1,\ldots,v_n\}$. Then $t(P_n,v_i,v_n) = (n-1)^2 - (i-1)^2$ \text{for} $1 \leq i \leq n - 1$.
\end{corollary}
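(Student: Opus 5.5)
The plan is to read the path $P_n$ as a stem attached to the absorbing vertex and then apply Theorem \ref{thm_stems} directly. Take $U = \{v_1, \ldots, v_{n-1}\}$ and $v = v_n$. Then $G[U]$ is a path, $d(v_1) = 1$, $d(v_i) = 2$ for $2 \le i \le n-1$, and $N(v_{n-1}) = \{v_{n-2}, v_n\}$. So $U$ is a stem of length $l = n-1$ attached to $v_n$, with the convention $v_n = u_{l+1}$.

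There is one subtlety about the hypothesis $d(u_l)=2$. The definition of a stem requires $d(u_i)=2$ for $2 \le i \le l$, which fails when $n=2$: then $l=1$, and $v_1$ has degree $1$ with $N(v_1)=\{v_2\}$. This case is harmless, because the proof of Theorem \ref{thm_stems} only uses the relation $t(u_1)=t(u_2)+1$ from Corollary \ref{cor_degreeone}. Alternatively, the case can be checked directly: $t(v_1)=t(v_2)+1=1=(2-1)^2-0^2$. For $n\ge 3$ the hypotheses hold as stated.

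Next, with absorbing vertex $x = v_n$, Theorem \ref{thm_stems} gives
\[
t(v_j) = t(v_n) + (n-1)^2 - (j-1)^2 \quad \text{for } 1 \le j \le n.
\]
Since $v_n$ is absorbing, $t(v_n) = t(P_n, v_n, v_n) = 0$, because a walk starting at the absorbing vertex takes zero steps. This yields $t(P_n, v_i, v_n) = (n-1)^2 - (i-1)^2$ for $1 \le i \le n-1$, as claimed.

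The main point needing care is justifying that the theorem applies when $v$ is itself the absorbing vertex. In the proof of Theorem \ref{thm_stems}, the ASUA equations from Theorem \ref{thm_mu} are used only at the transient vertices $u_1, \ldots, u_l$. The value $t(v)$ enters only as a boundary term, so substituting $t(v_n)=0$ is legitimate. Equivalently, the rows of $I-Q$ for $v_{n-1}$ read $t(v_{n-1}) - \tfrac12 t(v_{n-2}) = 1$, which is the stem recurrence with $t(v_n)=0$.

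As a sanity check, the claimed formula satisfies $t(v_1)=t(v_2)+1$ and the averaging relation $t(v_i) = \tfrac12\big(t(v_{i-1})+t(v_{i+1})\big)+1$, since
\[
\tfrac12\big((i-2)^2+i^2\big) - (i-1)^2 = 1 .
\]
By uniqueness of the solution of $N^{-1}[t]=[1]$, this independently confirms the result.
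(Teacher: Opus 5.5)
Your proposal is correct and is essentially the paper's proof: view $\{v_1,\ldots,v_{n-1}\}$ as a stem attached to $v_n$, apply Theorem~\ref{thm_stems}, and set $t(v_n)=0$. Your extra care with the degenerate case $n=2$ and with $v_n$ being the absorbing boundary vertex is sound and goes beyond what the paper writes. One small correction: when $n=2$ the condition that actually breaks is $N(u_l)=\{u_{l-1},v\}$, since there is no $u_0$; the range condition $d(u_i)=2$ for $2\le i\le l$ is vacuous.
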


\begin{proof}
    The path $P_n$ can be constructed as a stem $\{v_1,\ldots,v_{n-1}\}$ attached to $v_n$. By Theorem \ref{thm_stems}, $t(v_i) = t(v_n) + (n-1)^2 - (i-1)^2.$ Since $t(v_n) = 0$, the result is confirmed.
\end{proof}

\noindent As an example of the usefulness of ASUA equations, we present Theorem \ref{thm_cycles}. 

\begin{theorem}
\label{thm_cycles}
    Let $C_n$ be a cycle graph with vertices $\{v_1,v_2,\ldots,v_n\}$. Then $t(v_i)=t(C_n,v_i,v_n) = i(n-i)$ for $1 \leq i\leq n-1$.
\end{theorem}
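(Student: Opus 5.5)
We follow the method the paper describes: propose the candidate vector $t(v_i)=i(n-i)$ and check that it satisfies the ASUA equations, i.e. $N^{-1}\cdot[t]=[1]$. Since $N$ is invertible, the solution is unique, so the candidate is the true vector of ASUAs. The absorbing vertex $v_n$ has $t(v_n)=0$, which agrees with the formula at $i=n$ (and also at $i=0$ if we identify $v_0$ with $v_n$). So we adopt the convention $t(v_0)=t(v_n)=0$ and treat indices cyclically.

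By Theorem \ref{thm_mu}, every transient vertex $v_i$ with $1\le i\le n-1$ has $N(v_i)=\{v_{i-1},v_{i+1}\}$ (indices taken mod $n$). Its ASUA equation is therefore
\[t(v_i)=\tfrac12\bigl(t(v_{i-1})+t(v_{i+1})\bigr)+1.\]
We check this for the candidate. For interior indices $2\le i\le n-2$, we compute
\[(i-1)(n-i+1)+(i+1)(n-i-1) = 2i(n-i)-2.\]
Half of this is $i(n-i)-1$, so adding $1$ gives $i(n-i)$, as required. For the boundary indices $i=1$ and $i=n-1$, one neighbor is $v_n$ with $t=0$. The same computation covers them because the formula $i(n-i)$ vanishes at $i=0$ and $i=n$.

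An alternative route uses symmetry. Reflecting the cycle about $v_n$ gives $t(v_i)=t(v_{n-i})$. The equations then form a second-order linear recurrence with constant second difference $-2$ and boundary values $0$ at both ends. Its solution is a quadratic in $i$ with leading coefficient $-1$ and roots $0$ and $n$, which forces $t(v_i)=i(n-i)$. However, the verification route is cleaner given the paper's uniqueness argument.

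The only genuine subtlety is the boundary vertices $v_1$ and $v_{n-1}$, which are adjacent to the absorbing vertex. We must make sure the ASUA equation there uses $t(v_n)=0$, and that the row of $I-Q$ for such a vertex has only one off-diagonal entry, namely $-\frac12$ at the other neighbor. Since the absorbing column is removed, this is consistent with plugging in $0$. The case $n=2$ is degenerate, a multigraph, and we would exclude it or note that $n\ge3$.
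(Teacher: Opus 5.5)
Your proof is correct, but it runs in the opposite direction from the paper's. You guess $t(v_i)=i(n-i)$, verify it against every ASUA equation, and then invoke invertibility of $I-Q$ for uniqueness. This is exactly the strategy the paper uses for Theorems~\ref{thm:sd1} and~\ref{thm:sd4}, but not for this theorem. You handle the two equations at $v_1$ and $v_{n-1}$ uniformly by observing that $i(n-i)$ vanishes at $i=0$ and $i=n$.

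The paper instead derives the formula. It uses only the equation at $v_1$ and the equations at $v_2,\ldots,v_{n-2}$ to show by induction that $t(v_i)=i\bigl(t(v_1)-(i-1)\bigr)$. It then closes the system with the reflection symmetry $t(v_{n-1})=t(v_1)$ rather than with the equation at $v_{n-1}$. This gives $(n-2)\,t(v_1)=(n-1)(n-2)$, hence $t(v_1)=n-1$ for $n\ge 3$.

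The paper's route does not need the answer in advance, since it discovers $t(v_1)=n-1$. The cost is an induction and an appeal to symmetry whose justification is left implicit: the reflection maps the linear system to itself, so the unique solution is symmetric. Your route is shorter and checks every equation explicitly, but it presupposes the formula.

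Your ``alternative route'' is essentially the paper's argument recast as a discrete boundary value problem. There the symmetry is unnecessary: the general solution of the recurrence is $-i^2+Ai+B$, and the two boundary values $t(v_0)=t(v_n)=0$ already force $B=0$ and $A=n$.
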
 

\begin{proof}
\noindent By Theorem \ref{thm_mu}, $t(v_i) = \dfrac{1}{2}(t(v_{i-1}) + t(v_{i+1})) + 1$ for $1 \leq i \leq n-1$ with arithmetic on the indices being modulo $n$. Since $t(v_n) = 0$,

\begin{equation}\label{eq_cyclev1} t(v_1) = \dfrac{1}{2}(t(v_{n}) + t(v_{2})) + 1 = \dfrac{1}{2}t(v_2) + 1. \end{equation} 

\par We will prove by induction that

\begin{equation}\label{eq_cycleind} t(v_i) = i(t(v_1)-(i-1)) \text{ for } i \geq 1. \end{equation}

If $i = 1$, this equation resolves to $t(v_1) = t(v_1)$. By (\ref{eq_cyclev1}), $t(v_2) = 2t(v_1) - 2 = 2(t(v_1)-1)$, and so the statement holds for $i = 2$. Suppose that $t(v_{j'}) = j'(t(v_1) + (j'-1))$ for $2 \leq j' < j$. Then, $t(v_{j-1})  =  \dfrac{1}{2}(t(v_{j}) + t(v_{j-2})) + 1$ and so 
    \begin{align*}
      2t(v_{j-1}) & =  t(v_j) + t(v_{j-2}) + 2 \\
     t(v_j) & = 2t(v_{j-1}) - t(v_{j-2}) - 2, \\  
\end{align*}

and by induction

\begin{align*}
    t(v_j) & = 2t(v_{j-1}) - t(v_{j-2}) - 2 \\
     & = 2(j-1)(t(v_1) - (j-2)) - (j-2)(t(v_1) - (j-3)) - 2 \\
     & = jt(v_1) - (j-2)(j+1) - 2 \\
     & = j(t(v_1) - (j-1) )
\end{align*}

\noindent as was desired to be shown.

\par Thus $t(v_{n-1}) = (n-1)(t(v_{n-1} - (n-2))$. Note that by symmetry of $C_n$, $t(v_{n-i}) = t(v_i)$ for $1 \leq i \leq n-1$. Thus $t(v_{n-1}) = t(v_1)$. Hence $t(v_1) = (n-1)(t(v_1)-(n-2)) \Rightarrow t(v_1) = n-1$. Thus (\ref{eq_cycleind}) becomes 

\[t(v_i)=t(C_n,v_i,v_n) = i(n-i) \text{ for } 1 \leq i \leq n-1.\]
    
\end{proof}

\section{Trees and Sea Dragons}

\noindent We now turn our attention to a class of trees we refer to as sea dragons. We define $G$ to be a \emph{sea dragon} if $G$ is a tree with a unique path $P$ (called the \emph{spine}) such that any vertex $v$ with $d(v) \geq 3$ lies on $P$. Throughout the paper, we enumerate the vertices of the spine as $\{v_1,\ldots, v_n\}$. There are several classes of sea dragons that we will investigate, referred to as SD1, SD2, and SD3 pictured in Figures \ref{SSD1}, \ref{SSD2}, and \ref{SSD3}.

\begin{definition}
A sea dragon $G$ is in class SD1 if $V(G) = \{v_1,\ldots,v_n\} \cup \{u_{k_1}, u_{k_2}, \ldots, u_{k_a}\}$,
where each $u_{k_i}$ is a leaf adjacent to $v_{k_i}$ for $1 \leq i \leq a$. 
We denote $G$ as $T(n,\{k_1,k_2,\ldots,k_a\})$.
\end{definition}

\begin{figure}[h] 
\begin{center}
\begin{tikzpicture}[scale=1]
\coordinate (A) at (-3,0);
\coordinate (B) at (-2,0);
\coordinate (C) at (-1,0);
\coordinate (D) at (0,0);
\coordinate (E) at (1,0);
\coordinate (F) at (2,0);
\coordinate (G) at (3,0);
\coordinate (H) at (-2,1);
\coordinate (I) at (0,1);
\coordinate (J) at (1,1);

\draw [fill=black,thick] (A) circle [radius=1pt];
\draw [fill=black,thick] (B) circle [radius=1pt];
\draw [fill=black,thick] (C) circle [radius=1pt];
\draw [fill=black,thick] (D) circle [radius=1pt];
\draw [fill=black,thick] (E) circle [radius=1pt];
\draw [fill=black,thick] (F) circle [radius=1pt];
\draw [fill=black,thick] (G) circle [radius=1pt];
\draw [fill=black,thick] (H) circle [radius=1pt];
\draw [fill=black,thick] (I) circle [radius=1pt];
\draw [fill=black,thick] (J) circle [radius=1pt];

\draw [line width=1pt] (A) to (B);
\draw [line width=1pt] (B) to (C);
\draw [line width=1pt, dash pattern=on 1pt off 2pt] (C) to (D);
\draw [line width=1pt] (D) to (E);
\draw [line width=1pt, dash pattern=on 1pt off 2pt] (E) to (F);
\draw [line width=1pt] (F) to (G);
\draw [line width=1pt] (B) to (H);
\draw [line width=1pt] (D) to (I);
\draw [line width=1pt] (E) to (J);

\node at (-3,-.3) {$v_1$};
\node at (-2,-.3) {$v_2$};
\node at (-1,-.3) {$v_3$};
\node at (0,-.3) {$v_{k_1}$};
\node at (1,-.3) {$v_{k_2}$};
\node at (3,-.3) {$v_n$};
\node at (-2,1.3) {$u_2$};
\node at (0,1.3) {$u_{k_1}$};
\node at (1,1.3) {$u_{k_2}$};

\end{tikzpicture}
\end{center}
\caption{Category SD1: $T(n,\{2,k_1,k_2\})$}
\label{SSD1}
\end{figure}
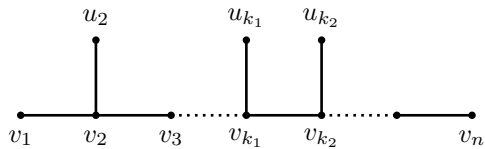

\begin{definition}
A sea dragon $G$ is in class SD2 if $V(G) = \{v_1,\ldots,v_n\} \cup \{u_{k_1}, u_{k_2}, \ldots, u_{k_b}\}$,
where each $u_i$ is a leaf adjacent to the same spine vertex $v_k$ for $1 \leq i \leq b$. 
We denote $G$ as $T(n,(k,b))$.
\end{definition}

\begin{figure}[h] 
\begin{center}
\begin{tikzpicture}[scale=1]
\coordinate (A) at (-3,0);
\coordinate (B) at (-2,0);
\coordinate (C) at (-1,0);
\coordinate (D) at (0,0);
\coordinate (E) at (1,0);
\coordinate (F) at (2,0);
\coordinate (G) at (3,0);
\coordinate (H) at (-.5,1);
\coordinate (I) at (0,1);
\coordinate (J) at (.5,1);

\draw [fill=black,thick] (A) circle [radius=1pt];
\draw [fill=black,thick] (B) circle [radius=1pt];
\draw [fill=black,thick] (C) circle [radius=1pt];
\draw [fill=black,thick] (D) circle [radius=1pt];
\draw [fill=black,thick] (E) circle [radius=1pt];
\draw [fill=black,thick] (F) circle [radius=1pt];
\draw [fill=black,thick] (G) circle [radius=1pt];
\draw [fill=black,thick] (H) circle [radius=1pt];
\draw [fill=black,thick] (I) circle [radius=1pt];
\draw [fill=black,thick] (J) circle [radius=1pt];

\draw [line width=1pt] (A) to (B);
\draw [line width=1pt, dash pattern=on 1pt off 2pt] (B) to (C);
\draw [line width=1pt] (C) to (D);
\draw [line width=1pt] (D) to (E);
\draw [line width=1pt, dash pattern=on 1pt off 2pt] (E) to (F);
\draw [line width=1pt] (F) to (G);
\draw [line width=1pt] (D) to (H);
\draw [line width=1pt] (D) to (I);
\draw [line width=1pt] (D) to (J);

\node at (-3,-.3) {$v_1$};
\node at (-2,-.3) {$v_2$};
\node at (-1,-.3) {$v_{k-1}$};
\node at (0,-.3) {$v_{k}$};
\node at (1,-.3) {$v_{k+1}$};
\node at (3,-.3) {$v_n$};
\node at (-.5,1.3) {$u_1$};
\node at (0,1.3) {$u_2$};
\node at (.5,1.3) {$u_{b}$};

\end{tikzpicture}
\end{center}
\caption{Category SD2: $T(n,(k,b))$}
\label{SSD2}
\end{figure}
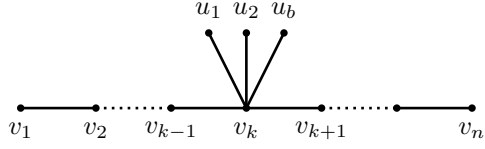

\begin{definition}
A sea dragon $G$ is in class SD3 if $V(G) = \{v_1,\ldots,v_n\} \cup \{u_{k_1}, u_{k_2}, \ldots, u_{k_a}\}$, 
where $\{u_1,\ldots,u_c\}$ forms a stem attached to $v_k$, with $d(u_1) = 1$. 
We denote $G$ as $T(n, k^{(c)})$.
\end{definition}

\begin{figure}[h] 
\begin{center}
\begin{tikzpicture}[scale=1]
\coordinate (A) at (-3,0);
\coordinate (B) at (-2,0);
\coordinate (C) at (-1,0);
\coordinate (D) at (0,0);
\coordinate (E) at (1,0);
\coordinate (F) at (2,0);
\coordinate (G) at (3,0);
\coordinate (H) at (0,.5);
\coordinate (I) at (0,1);
\coordinate (J) at (0,1.5);

\draw [fill=black,thick] (A) circle [radius=1pt];
\draw [fill=black,thick] (B) circle [radius=1pt];
\draw [fill=black,thick] (C) circle [radius=1pt];
\draw [fill=black,thick] (D) circle [radius=1pt];
\draw [fill=black,thick] (E) circle [radius=1pt];
\draw [fill=black,thick] (F) circle [radius=1pt];
\draw [fill=black,thick] (G) circle [radius=1pt];
\draw [fill=black,thick] (H) circle [radius=1pt];
\draw [fill=black,thick] (I) circle [radius=1pt];
\draw [fill=black,thick] (J) circle [radius=1pt];

\draw [line width=1pt] (A) to (B);
\draw [line width=1pt, dash pattern=on 1pt off 2pt] (B) to (C);
\draw [line width=1pt] (C) to (D);
\draw [line width=1pt] (D) to (E);
\draw [line width=1pt, dash pattern=on 1pt off 2pt] (E) to (F);
\draw [line width=1pt] (F) to (G);
\draw [line width=1pt] (D) to (H);
\draw [line width=1pt, dash pattern=on 1pt off 2pt] (H) to (I);
\draw [line width=1pt] (I) to (J);

\node at (-3,-.3) {$v_1$};
\node at (-2,-.3) {$v_2$};
\node at (-1,-.3) {$v_{k-1}$};
\node at (0,-.3) {$v_{k}$};
\node at (1,-.3) {$v_{k+1}$};
\node at (3,-.3) {$v_n$};
\node at (-.5,.5) {$u_c$};
\node at (-.5,1) {$u_2$};
\node at (-.5,1.5) {$u_1$};

\end{tikzpicture}
\end{center}
\caption{Category SD3: $T(n,k^{(c)})$}
\label{SSD3}
\end{figure}
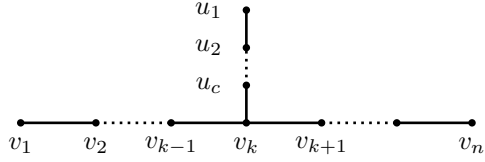

\noindent We can now present ASUA equations for SD1, SD2, and SD3.

\begin{theorem}[ASUAs of SD1 Trees]
\label{thm:sd1}
\noindent Let $G = T(n,\{k_1,\ldots, k_a\})$, $t(v_i) = t(G,v_i,v_n)$, and, by convention, $k_0 = 1$. Then for $0 \leq s \leq a-1$,

\[
t(v_i) = 
\begin{cases}
    n^2- i^2 +2(a-1)n -2(s-1)i -2\displaystyle\sum_{j=s+1}^a k_j, & \text{for } k_s \leq i \leq k_{s+1} \\
    n^2- i^2 + 2(a-1)(n-i), & \text{for } k_{a} \leq i \leq n-1 
\end{cases}
\]
\end{theorem}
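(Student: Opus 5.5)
The plan follows the method announced in the paper's proof-method subsection. Since $N^{-1}\cdot[t]=[1]$ has a unique solution, it suffices to check that the proposed values satisfy every ASUA equation of $G$. For the leaves we set $t(u_{k_j})=t(v_{k_j})+1$, which is forced by Corollary \ref{cor_degreeone}. This disposes of the leaf equations and lets us eliminate the leaves. By Corollary \ref{cor_sd1}, the remaining conditions on the spine are:
\begin{align*}
t(v_n)&=0,\\
t(v_1)&=t(v_2)+1 \quad (\text{if } 1\notin\{k_j\}),\\
t(v_i)&=\tfrac12\bigl(t(v_{i-1})+t(v_{i+1})\bigr)+1 \quad \text{for spine vertices of degree } 2,\\
t(v_{k_j})&=\tfrac12\bigl(t(v_{k_j-1})+t(v_{k_j+1})\bigr)+2.
\end{align*}
So the task reduces to checking these four discrete conditions for the piecewise formula.

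First I will rewrite the formula uniformly. Reading the last case as the $s=a$ instance of the first (empty sum, linear coefficient $-2(a-1)i$), each piece has the form $f_s(i)=n^2-i^2+2(a-1)n-2(s-1)i-2\sum_{j>s}k_j$. Then I check three things in turn.

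\begin{enumerate}
\item \emph{Well-definedness at breakpoints.} I verify $f_{s-1}(k_s)=f_s(k_s)$. The difference in the linear terms, $2k_s$, exactly cancels the extra summand $-2k_s$.
\item \emph{$v_n$ and $v_1$.} The condition $t(v_n)=f_a(n)=0$ is immediate. For $v_1$ (assuming $k_1>1$) I compute $f_0(1)-f_0(2)=3-2=1$, as required.
\item \emph{Interior spine equations.} Each $f_s$ is quadratic with leading term $-i^2$, so the centered second difference gives $f_s(i)-\tfrac12(f_s(i-1)+f_s(i+1))=1$ whenever $i$ is strictly inside an interval. At a breakpoint $i=k_s$, the left neighbor uses $f_{s-1}$ and the right neighbor uses $f_s$. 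Replacing $f_{s-1}(k_s-1)$ by $f_s(k_s-1)$ changes it by $-2(k_s-1)+2k_s=2$, so the half-average drops by $1$ relative to the pure-$f_s$ value. This yields $t(v_{k_s})=\tfrac12(\cdot)+2$, exactly the Corollary \ref{cor_sd1} condition.
\end{enumerate}

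The main obstacle is bookkeeping at the boundaries rather than any conceptual step. Specifically:
\begin{itemize}
\item Consecutive indices $k_{s+1}=k_s+1$: here an interval has no interior, but the breakpoint computation above is local and still applies.
\item $k_a=n-1$: the right neighbor is $v_n$, and I must check that $f_a(n)=0$ is the value used.
\item $k_1=1$: this clashes with the convention $k_0=1$. Here $v_1$ has degree $2$, with neighbors $v_2$ and $u_1$, so its equation becomes $t(v_1)=t(v_2)+3$. I would verify this separately from $f_1$: $f_1(1)-f_1(2)=3$, which holds.
\end{itemize}
I expect to spend most care confirming that the indexing $-2(s-1)i$ matches these edge cases consistently. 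Once all spine equations hold, uniqueness of the solution to $N^{-1}[t]=[1]$ completes the proof.
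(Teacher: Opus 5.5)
Your proposal is correct and follows essentially the paper's argument. You write the candidate as one quadratic per section (the paper's $g(s,i)$, with the empty sum for $s=a$), check it against every local ASUA equation, and conclude by uniqueness of the solution of $N^{-1}[t]=[1]$. Your continuity check $f_{s-1}(k_s)=f_s(k_s)$ and the jump-of-$2$ computation at $i=k_s$ replace the paper's four local ``Types'' with a single calculation. Your separate handling of $k_1=1$, where $t(v_1)=t(v_2)+3$, also covers a case that the paper's $i=1$ check omits.
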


\begin{proof}

\noindent Let $g(s,i) = n^2- i^2 +2(a-1)n -2(s-1)i -2\sum_{j=s+1}^a k_j, \text{ for } k_s \leq i \leq k_{s+1}$ and $g(a,i)=  n^2- i^2 + 2(a-1)(n-i), \text{ for } k_{a} \leq i \leq n-1$. Note that if we allow $\sum_{j=a+1}^a k_j$ to take the value of 0, then $g(s,i) = n^2- i^2 +2(a-1)n -2(s-1)i -2\sum_{j=s+1}^a k_j$ for all values of $i$ and $s$. To help communicate structure needed in our argument, we say vertices \{$v_{k_s}, v_{k_s + 1}, \ldots, v_{k_{s+1} -1}$\} form a \emph{section} of the graph.

\noindent We will show that $g(s,i)$ is consistent solution to ASUA equations associated with the graph in all cases. We first look at the end cases of $i=1$ and $i = n-1$.

\noindent Let $i=1$ and $k_1 > 2$. Then $t(v_1)  = t(v_2) + 1 = g(0,2) + 1 = n^2 - 1^2 + 2(a-1)n + 2 - 2\sum_{j = 1}^a k_j = g(0,1)$. Likewise if $k_1 = 2$, then $t(v_2) + 1 = g(1,2) + 1 = n^2 - 1^2 - 3 + 2(a-1)n - 0 - 2\sum_{j = 1}^a k_j + 1 = n^2 - 1^2 - 2(a-1)n - 2(0-1) - 2\sum_{j = 1}^a k_j = g(0,1)$.

\noindent Now let $i = n-1$ and $k_a < n-1$. Then $t(v_{n-1}) = \dfrac{1}{2}t(v_{n-2}) + 1 = \dfrac{1}{2}g(a, n-2) + 1$. Thus

\begin{align*}
    t(v_{n-1}) & = \dfrac{1}{2}(n^2 - (n-2)^2 + 2(a-1)n - 2(a-1)(n-2)) + 1 \\
    & = \dfrac{1}{2}(4n - 4 + 2(a-1)(n - (n-2))) + 1 \\
    & = 2n -2 + 2(a-1) + 1 \\
    & = n^2 - n^2 + 2n -1 + 2(a-1)(n-(n-1)) = g(a,n-1).
\end{align*}

If $k_a = n-1$. Thus $t(v_{n-1}) = \dfrac{1}{2}t(v_{n-2}) + 2 = \dfrac{1}{2}g(a-1, n-2) + 2$, and so

\begin{align*}
    t(v_{n-1}) & = \dfrac{1}{2}(n^2 - (n-2)^2 + 2(a-1)n - 2(a-2)(n-2) - 2k_a) + 2 \\
    & = 2n -2 + (a-1)n - (a-2)(n-2) - (n-1) + 2 \\
    & = n^2 - (n-1)^2 + 2a - 4 + 2 \\
    & = n^2 - (n-1)^2 + 2(a-1) = g(a,n-1).
\end{align*}

\noindent We now turn to $t(v_i)$ for $2 \leq i \leq n-2$. The local structure of $v_{i-1}, v_i,$ and $v_{i+1}$ falls into one of four types pictured in Figure \ref{tikz_sd1proof}.

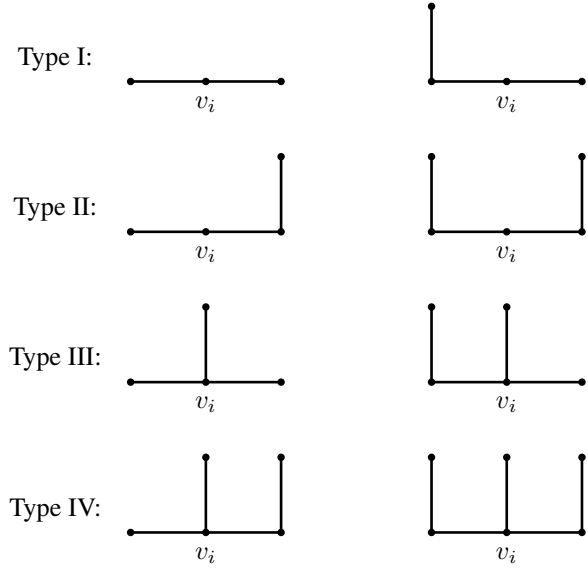
\begin{figure}[h] 
\label{tikz_sd1proof}
\begin{center}
\begin{tikzpicture}[scale=1]
\coordinate (A) at (-3,4);
\coordinate (B) at (-2,4);
\coordinate (C) at (-1,4);
\coordinate (D) at (0,4);
\coordinate (E) at (1,4);
\coordinate (F) at (2,4);
\coordinate (G) at (3,4);

\coordinate (I) at (1,5);
\coordinate (J) at (3,5);

\draw [fill=black,thick] (A) circle [radius=1pt];
\draw [fill=black,thick] (B) circle [radius=1pt];
\draw [fill=black,thick] (C) circle [radius=1pt];
\draw [fill=black,thick] (E) circle [radius=1pt];
\draw [fill=black,thick] (F) circle [radius=1pt];
\draw [fill=black,thick] (G) circle [radius=1pt];
\draw [fill=black,thick] (I) circle [radius=1pt];

\draw [line width=1pt] (A) to (B);
\draw [line width=1pt] (B) to (C);

\draw [line width=1pt] (E) to (F);
\draw [line width=1pt] (F) to (G);
\draw [line width=1pt] (E) to (I);

\node at (-4,4.3) {Type I:};
\node at (-2,3.7) {$v_i$};
\node at (2,3.7) {$v_i$};


\coordinate (A1) at (-3,2);
\coordinate (B1) at (-2,2);
\coordinate (C1) at (-1,2);
\coordinate (D1) at (0,2);
\coordinate (E1) at (1,2);
\coordinate (F1) at (2,2);
\coordinate (G1) at (3,2);

\coordinate (L1) at (-1,3);
\coordinate (M1) at (1,3);
\coordinate (N1) at (3,3);

\draw [fill=black,thick] (A1) circle [radius=1pt];
\draw [fill=black,thick] (B1) circle [radius=1pt];
\draw [fill=black,thick] (C1) circle [radius=1pt];
\draw [fill=black,thick] (E1) circle [radius=1pt];
\draw [fill=black,thick] (F1) circle [radius=1pt];
\draw [fill=black,thick] (G1) circle [radius=1pt];

\draw [fill=black,thick] (L1) circle [radius=1pt];
\draw [fill=black,thick] (M1) circle [radius=1pt];
\draw [fill=black,thick] (N1) circle [radius=1pt];

\draw [line width=1pt] (A1) to (B1);
\draw [line width=1pt] (B1) to (C1);
\draw [line width=1pt] (E1) to (F1);
\draw [line width=1pt] (F1) to (G1);

\draw [line width=1pt] (C1) to (L1);
\draw [line width=1pt] (E1) to (M1);
\draw [line width=1pt] (G1) to (N1);

\node at (-4,2.3) {Type II:};
\node at (-2,1.7) {$v_i$};
\node at (2,1.7) {$v_i$};


\coordinate (A2) at (-3,0);
\coordinate (B2) at (-2,0);
\coordinate (C2) at (-1,0);
\coordinate (D2) at (0,0);
\coordinate (E2) at (1,0);
\coordinate (F2) at (2,0);
\coordinate (G2) at (3,0);
\coordinate (H2) at (4,0);

 \coordinate (I2) at (-2,1); 
\coordinate (L2) at (1,1);
\coordinate (J2) at (2,1);

\draw [fill=black,thick] (A2) circle [radius=1pt];
\draw [fill=black,thick] (B2) circle [radius=1pt];
\draw [fill=black,thick] (C2) circle [radius=1pt];
\draw [fill=black,thick] (E2) circle [radius=1pt];
\draw [fill=black,thick] (F2) circle [radius=1pt];
\draw [fill=black,thick] (G2) circle [radius=1pt];

\draw [fill=black,thick] (L2) circle [radius=1pt];
\draw [fill=black,thick] (I2) circle [radius=1pt];
\draw [fill=black,thick] (J2) circle [radius=1pt];

\draw [line width=1pt] (A2) to (B2);
\draw [line width=1pt] (B2) to (C2);
\draw [line width=1pt] (E2) to (F2);
\draw [line width=1pt] (F2) to (G2);
\draw [line width=1pt] (L2) to (E2);
\draw [line width=1pt] (I2) to (B2);
\draw [line width=1pt] (J2) to (F2);

\node at (-4,.3) {Type III:};
\node at (-2,-.3) {$v_i$};
\node at (2,-.3) {$v_i$};


\coordinate (A3) at (-3,-2);
\coordinate (B3) at (-2,-2);
\coordinate (C3) at (-1,-2);
\coordinate (D3) at (-2,-1);
\coordinate (E3) at (-1,-1);
\coordinate (F3) at (-1,-2);
\coordinate (H3) at (0,-2);
\coordinate (I3) at (1,-2);
\coordinate (J3) at (2,-2);
\coordinate (K3) at (3,-2);
\coordinate (L3) at (1,-1);
\coordinate (M3) at (2,-1);
\coordinate (N3) at (3,-1);

\draw [fill=black,thick] (A3) circle [radius=1pt];
\draw [fill=black,thick] (B3) circle [radius=1pt];
\draw [fill=black,thick] (C3) circle [radius=1pt];
\draw [fill=black,thick] (D3) circle [radius=1pt];
\draw [fill=black,thick] (E3) circle [radius=1pt];
\draw [fill=black,thick] (I3) circle [radius=1pt];
\draw [fill=black,thick] (J3) circle [radius=1pt];
\draw [fill=black,thick] (K3) circle [radius=1pt];
\draw [fill=black,thick] (L3) circle [radius=1pt];
\draw [fill=black,thick] (M3) circle [radius=1pt];
\draw [fill=black,thick] (N3) circle [radius=1pt];

\draw [line width=1pt] (A3) to (B3);
\draw [line width=1pt] (B3) to (C3);
\draw [line width=1pt] (D3) to (B3);
\draw [line width=1pt] (E3) to (C3);
\draw [line width=1pt] (I3) to (J3);
\draw [line width=1pt] (J3) to (K3);
\draw [line width=1pt] (I3) to (L3);
\draw [line width=1pt] (J3) to (M3);
\draw [line width=1pt] (K3) to (N3);

\node at (-4,-1.7) {Type IV:};
\node at (-2,-2.3) {$v_i$};
\node at (2,-2.3) {$v_i$};

\end{tikzpicture}
\end{center}
\caption{Structure types for SD1}
\label{grotlem2}
\end{figure}

\noindent The Type I structure is characterized by $v_{i-1}, v_i,$ and $v_{i+1}$ all belonging to the same section. In this case, $t(v_i) = \dfrac{1}{2}(g(s,i-1) + g(s,i+1)) + 1$. Thus

\begin{align*}
    t(v_i) & = \dfrac{1}{2}(g(s,i-1) + g(s,i+1)) + 1 \\
          & = n^2 + 2(a-1)n - 2\displaystyle \sum_{j = s+1}^a k_j - (i^2 + 1) \\
            & \hspace{.5in} + \dfrac{1}{2}(-2(s-1)(i - 1) - 2(s-1)(i+1)) + 1 \\
            & = n^2 - i^2 - 2(a-1)n - 2(s-1)i - 2\displaystyle \sum_{j = s+1}^a k_j - 1 + 1\\
            & = g(s,i).
\end{align*}

\noindent Type II structure is characterized by $v_{i-1}$ and $v_i$ belonging to the $s^{th}$ section while $v_{i+1}$ in the $(s+1)^{th}$ section. In this case, $v_i = k_{s+1} - 1$ and $t(v_i) = \dfrac{1}{2}(g(s,i-1) + g(s,i+1)) + 1$. Thus,

\begin{align*}
    t(v_i) & = \dfrac{1}{2}(g(s,i-1) + g(s+1,i+1)) + 1 \\
          & = n^2 + 2(a-1)n - i^2 -1 \, +  \\
            & \hspace{.5in} + \dfrac{1}{2}(-2(s-1)(i - 1) - 2s(i+1) - 2\displaystyle \sum_{j = s+1}^a k_j - 2\displaystyle \sum_{j = s+2}^a k_j) + 1 \\
            & = n^2 - i^2 - 2(a-1)n - 2si + 2i - i - 1- 2\displaystyle \sum_{j = s+2}^a k_j - k_{s+1} \\
            & = n^2 - i^2 - 2(a-1)n - 2(s-1)i - k_{s+1} + 1 -1 -  2\displaystyle \sum_{j = s+2}^a k_j - k_{s+1} \\
            & = g(s,i).
\end{align*}

\noindent Type III structure is characterized by $v_{i-1}$ belonging to the $(s+1)^{th}$ section while $v_i$ and $v_{i+1}$ belong to the $s^{th}$ section. In this case, $v_i = k_{s}$ and $t(v_i) = \dfrac{1}{2}(g(s-1,i-1) + g(s,i+1)) + 2$. Thus,

\begin{align*}
    t(v_i) & = \dfrac{1}{2}(g(s-1,i-1) + g(s,i+1)) + 2 \\
          & = n^2 + 2(a-1)n - i^2 -1 \, +  \\
            & \hspace{.5in} + \dfrac{1}{2}(-2(s-2)(i - 1) - 2(s-1)(i+1) - 2\displaystyle \sum_{j = s}^a k_j - 2\displaystyle \sum_{j = s+1}^a k_j) + 2 \\
            & = n^2 - i^2 - 2(a-1)n - 1 - 2si + 2i + i - 1- 2\displaystyle \sum_{j = s+1}^a k_j - k_{s} + 2 \\
            & = g(s,i) -2 + k_s - k_s + 2 = g(s,i).
\end{align*}

\noindent Lastly, Type IV structure is characterized by each of $v_{i-1}, v_i$, and $v_{i+1}$ all belonging to different sections. In this case, $i = k_s = k_{s+1} -1$, and $t(v_i) = \dfrac{1}{2}(g(s-1,i-1) + g(s+1,i+1)) + 2$. Thus,

\begin{align*}
    t(v_i) & = \dfrac{1}{2}(g(s-1,i-1) + g(s+1,i+1)) + 2 \\
          & = n^2 + 2(a-1)n - i^2 -1 \, +  \\
            & \hspace{.5in} + \dfrac{1}{2}(-2(s-2)(i - 1) - 2s(i+1) - 2\displaystyle \sum_{j = s}^a k_j - 2\displaystyle \sum_{j = s+s}^a k_j) + 2 \\
            & = n^2 - i^2 - 2(a-1)n - 1 - 2si + 2i + 2 - 2\displaystyle \sum_{j = s+1}^a k_j - k_{s} - (k_{s} + 1) + 2 \\
            & = g(s,i) - 1 + k_s - k_s -1 + 2 = g(s,i).
\end{align*}

\noindent Thus $t(v_i) = g(s,i)$ for all values of $i$ proving the theorem.

\end{proof}

\section{ASUAs of SD2 and SD3}

\noindent Now we present similar theorems for SD2 and SD3 graphs.

\begin{theorem}[ASUAs of SD2 Trees]
\label{thm_sd2}
\noindent Let $t(v_i) = t(T(n,(k,b)),v_i,v_n)$. Then 

\[
t(v_i) = 
\begin{cases}
    n^2- k^2 + 2(b-1)(n-k) + (k+1)^2 - (i-1)^2, & \text{for } 1 \leq i \leq k-1 \\
    n^2- i^2 + 2(b-1)(n-i), & \text{for } k \leq i \leq n-1 
\end{cases}
\]
\end{theorem}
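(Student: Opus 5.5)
We follow the paper's method from Theorem \ref{thm:sd1}: propose the piecewise function $g(i)$ in the statement, check that it satisfies every ASUA equation of $T(n,(k,b))$, and conclude by uniqueness of the solution of $N^{-1}\cdot[t]=[1]$. Let $g_2(i)=n^2-i^2+2(b-1)(n-i)$ for $k\le i\le n-1$, and let $g_1(i)$ be the first-case expression for $1\le i\le k-1$. The leaves $u_1,\ldots,u_b$ at $v_k$ are removed from the system by Corollary \ref{cor_degreeone}, which gives $t(u_j)=t(v_k)+1$.

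\textbf{Spine to the right of $v_k$.} For $k<i<n-1$, Theorem \ref{thm_mu} gives $t(v_i)=\frac12(t(v_{i-1})+t(v_{i+1}))+1$. Since $g_2$ is quadratic in $i$ with leading coefficient $-1$, we have $\frac12(g_2(i-1)+g_2(i+1))=g_2(i)-1$, so the equation holds. At $i=n-1$ we use $t(v_n)=0$ and check $g_2(n-1)=\frac12 g_2(n-2)+1$. This is the same computation as the $k_a<n-1$ endpoint case in the proof of Theorem \ref{thm:sd1}, with $a$ replaced by $b$.

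\textbf{The vertex $v_k$.} The path $\{v_1,\ldots,v_{k-1}\}$ is a stem of length $k-1$ attached to $v_k$. The leaves $u_j$ are $b$ stems of length $1$. Theorem \ref{thm_mu} gives
\[
(b+2)\,t(v_k)=t(v_{k-1})+t(v_{k+1})+\sum_{j=1}^{b} t(u_j)+(b+2),
\]
and we eliminate each $t(u_j)$ by $t(u_j)=t(v_k)+1$. We then substitute $g_1(k-1)$ for $t(v_{k-1})$ and $g_2(k\pm1)$ for the remaining spine values, and check that the result equals $g_2(k)$. The term $2(b-1)(n-k)$ must be balanced by the $b$ leaf contributions here, so this step is where the $b$-dependence of the formula is checked. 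If $k=1$ or $k=n-1$, the missing neighbor is replaced by the endpoint equation or by $t(v_n)=0$.

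\textbf{Spine to the left of $v_k$ (main obstacle).} By Theorem \ref{thm_stems}, applied to the stem $\{v_1,\ldots,v_{k-1}\}$ attached to $v_k$, every solution has $t(v_i)=t(v_k)+(k-1)^2-(i-1)^2$ for $1\le i\le k-1$. Substituting $t(v_k)=g_2(k)$ gives
\[
t(v_i)=n^2-k^2+2(b-1)(n-k)+(k-1)^2-(i-1)^2 .
\]
So the stem equations $t(v_1)=t(v_2)+1$ and the interior averaging equations force the constant term in $g_1$ to be $(k-1)^2$. The statement as written has $(k+1)^2$ there. These differ by $4k$ and cannot both satisfy the ASUA equations, so this is the step I expect to fail.

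I would test it on the smallest case first: $n=3$, $k=2$, $b=1$. Solving the ASUA equations directly gives $t(v_2)=5$ and $t(v_1)=6$. The second case of the statement gives $9-4=5$, which matches. The first case as written gives $9-4+9-0=14$, not $6$. So the verification can only go through for the first case with $(k-1)^2$. As worded, the first case cannot be proved, and this plan shows that the constant forced by Theorem \ref{thm_stems} differs from the stated one. The remaining checks (right spine, $v_k$, and the endpoints $i=1$ and $i=n-1$) are routine substitutions that I have not carried out in full here.
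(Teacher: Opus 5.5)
Your diagnosis is right, and the paper's own argument supports it. The paper never proves Theorem~\ref{thm_sd2} directly. It deduces it from Theorem~\ref{thm:sd4} by taking every stem of length $c_i=1$, so $r=d=b$. The first case of Theorem~\ref{thm:sd4} carries the constant $(k-1)^2$, not $(k+1)^2$. The printed $(k+1)^2$, which is repeated in Theorem~\ref{thm_sd3}, is therefore a typo. Your example $T(3,(2,1))$, with $t(v_2)=5$ and $t(v_1)=6$ against the printed value $14$, correctly refutes the statement as written. There is also a quick consistency check. With $(k-1)^2$, the first-case expression evaluated at $i=k$ equals the second case $g_2(k)=n^2-k^2+2(b-1)(n-k)$. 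With $(k+1)^2$ it overshoots by $4k$, which is exactly the failure Theorem~\ref{thm_stems} detects.

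For the corrected statement, your plan follows the paper's method: propose the piecewise formula, verify the ASUA equations, and invoke uniqueness of the solution of $N^{-1}\cdot[t]=[1]$. You differ in two ways.

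\begin{itemize}
\item You work with SD2 directly rather than with the SD4 family, which the paper uses to handle SD2 and SD3 at once.
\item On the left stretch you use Theorem~\ref{thm_stems} instead of checking the cases $i=1$, $2\le i\le k-2$ and $i=k-1$ one by one. This has the advantage of showing that $(k-1)^2$ is forced. To run a verification argument you also need the trivial converse: that $t(v_k)+(k-1)^2-(i-1)^2$ satisfies the stem equations.
\end{itemize}

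The checks you left open do go through. After eliminating the leaves, your $v_k$ equation is Corollary~\ref{cor_sd4} with $d=b$. Substitute $t(v_{k-1})=g_2(k)+2k-3$ and $t(v_{k+1})=g_2(k)-2k-1-2(b-1)$ into $\frac{1}{2}\bigl(t(v_{k-1})+t(v_{k+1})\bigr)+(b+1)$. The result is exactly $g_2(k)$.
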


\begin{theorem}[AUSAs of SD3 Trees]
\label{thm_sd3}
\noindent Let $t(v_i) = t(T(n,k^{(c)}),v_i,v_n)$. Then 

\[
t(v_i) = 
\begin{cases}
    n^2- k^2 + 2(c-1)(n-k) + (k+1)^2 - (i-1)^2, & \text{for } 1 \leq i \leq k-1 \\
    n^2- i^2 + 2(c-1)(n-i), & \text{for } k \leq i \leq n-1 
\end{cases}
\]
\end{theorem}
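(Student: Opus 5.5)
We follow the method of Theorem \ref{thm:sd1}. Write $f(i)$ for the right-hand side of the statement. Because $N^{-1}\cdot[t]=[1]$ has a unique solution, it suffices to show that $f$, extended to the stem vertices by Theorem \ref{thm_stems}, satisfies every ASUA equation of $T(n,k^{(c)})$ given by Theorem \ref{thm_mu}, with $t(v_n)=0$. Rather than verifying every row directly, we group the equations by structure. The stem $\{u_1,\dots,u_c\}$ is handled completely by Theorem \ref{thm_stems}, which gives $t(u_j)=t(v_k)+c^2-(j-1)^2$. Its only effect on the spine is Corollary \ref{cor_sd4} with $r=1$ and $d=c$:
\[
t(v_k)=\tfrac12\bigl(t(v_{k-1})+t(v_{k+1})\bigr)+c+1 .
\]

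\textbf{Right piece, $k\le i\le n-1$.} Here $f(i)=n^2-i^2+2(c-1)(n-i)$.
\begin{itemize}
\item For $k<i<n-1$ the equation is $t(v_i)=\tfrac12(t(v_{i-1})+t(v_{i+1}))+1$. It holds because the second difference of $-i^2$ is $-2$ and the linear term contributes nothing, exactly as in the Type I computation for Theorem \ref{thm:sd1}.
\item At $i=n-1$ we need $t(v_{n-1})=\tfrac12 t(v_{n-2})+1$. This is the same short computation as the $k_a<n-1$ end case of Theorem \ref{thm:sd1}, with $a$ replaced by $c$.
\item At $v_k$ we substitute into Corollary \ref{cor_sd4}. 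The right-piece formula gives $f(k)-f(k+1)=2k+2c-1$, so the junction equation reduces to the single requirement $t(v_k)-t(v_{k-1})=-(2k-3)$. This is the condition the left piece must meet.
\end{itemize}

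\textbf{Left piece, $1\le i\le k-1$.} The vertices $v_1,\dots,v_{k-1}$ form a stem of length $k-1$ attached to $v_k$. Theorem \ref{thm_stems} therefore forces
\[
t(v_i)=t(v_k)+(k-1)^2-(i-1)^2 .
\]
The equations at $v_1$ (from Corollary \ref{cor_degreeone}) and at $v_2,\dots,v_{k-1}$ then hold automatically, because the $-(i-1)^2$ profile has the required second difference. So the whole left piece reduces to checking one constant: whether the additive term in the stated formula, $n^2-k^2+2(c-1)(n-k)+(k+1)^2$, agrees with $t(v_k)+(k-1)^2$ evaluated from the right-piece formula at $i=k$.

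\textbf{Main obstacle: the junction at $v_k$.} This is where the statement's $(k+1)^2$ term must be reconciled with the $(k-1)^2$ that Theorem \ref{thm_stems} produces. I would first evaluate both expressions at $i=k-1$ and compare them with the junction condition $t(v_{k-1})=t(v_k)+2k-3$. Everything else in the verification is routine, in the style of Types I--III of Theorem \ref{thm:sd1}. The success of the argument for the formula as printed rests entirely on this one constant identity, so it is the step I would check first and most carefully. If it balances, uniqueness of the solution of $N^{-1}[t]=[1]$ finishes the proof.

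As a cross-check I would take the case $c=1$, where the graph is $T(n,\{k\})$ in class SD1. Theorem \ref{thm:sd1} with $a=1$, $s=0$ gives the left piece $n^2-i^2+2i-2k$ for $1\le i\le k$. The stated formula must reproduce this, which again tests only the offset constant.
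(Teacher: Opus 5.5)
Your reduction is sound, but the proposal stops at the one step that carries content, and that step fails. Theorem \ref{thm_stems}, applied to the stem $\{v_1,\dots,v_{k-1}\}$ attached to $v_k$, forces $t(v_i)=t(v_k)+(k-1)^2-(i-1)^2$ for $1\le i\le k-1$. The statement instead asserts $t(v_i)=t(v_k)+(k+1)^2-(i-1)^2$. The two differ by the constant $(k+1)^2-(k-1)^2=4k\neq 0$. Put the other way round, the printed formula gives $f(k-1)-f(k)=(k+1)^2-(k-2)^2=6k-3$ at the junction, while your own reduction of Corollary \ref{cor_sd4} requires $2k-3$. So the identity you deferred with ``if it balances'' does not balance. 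No argument can finish, because the statement as printed is false. The smallest case shows this: for $n=3$, $k=2$, $c=1$ the tree is $K_{1,3}$ centred at $v_2$, with $t(v_2)=5$ and $t(v_1)=6$, but the printed formula gives $t(v_1)=9-4+9=14$. Your proposed cross-check would also have exposed it. Theorem \ref{thm:sd1} with $a=1$ gives $n^2-i^2+2i-2k$, and this equals $n^2-k^2+(k-1)^2-(i-1)^2$, the $(k-1)^2$ version.

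The $(k+1)^2$ is a misprint. The paper obtains this theorem as the case $r=1$, $c_1=c$ of Theorem \ref{thm:sd4}, whose left piece has $(k-1)^2$, and that is what is actually proved. With the constant corrected, your argument is a complete proof and somewhat more economical than the paper's. The paper proves the general SD4 formula by checking the ASUA equations case by case. You instead dispose of the whole left piece and the attached stem at once via Theorem \ref{thm_stems}, and reduce everything to the single junction identity $t(v_{k-1})-t(v_k)=2k-3$. That identity holds because $(k-1)^2-(k-2)^2=2k-3$. One small fix: when $k=n-1$, the equation at $v_{n-1}$ is the junction equation, not $t(v_{n-1})=\tfrac12 t(v_{n-2})+1$. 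It is still covered by your junction check, since the right-piece formula gives $f(n)=0=t(v_n)$. What your write-up should have done is carry out the check you flagged and conclude that the statement needs $(k-1)^2$ in place of $(k+1)^2$.
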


\noindent Surprisingly, ASUA equations for SD2 and SD3 graphs are the same for the same values of $b$ and $c$. To prove Theorems \ref{thm_sd2} and \ref{thm_sd3}, we introduce a more general class of sea dragons, SD4.

\noindent We define the category SD4, $T(n,k,(c_1,c_2,\ldots,c_r))$, of sea dragons as follows. For $1\leq i \leq r$, let $S_i$ be a stem of length $c_i$ attached to $v_k$ via $u_i$. If $G$ is a SD4 sea dragon and $r=d$, then $G$ is also SD2, and if $r = 1$ and $c_1 = d$, then $G$ is class SD3. We now find ASUAs of SD4 sea dragons.

\begin{theorem}[ASUAs of SD4 Trees]
\label{thm:sd4}
\noindent Let $t(v_i) = t(T(n,k,(c_1, c_2, \ldots, c_r)),v_i,v_n)$, and $c_1 + c_2 + \ldots + c_r = d$. Then 

\[
t(v_i) = 
\begin{cases}
    n^2- k^2 + 2(d-1)(n-k) + (k-1)^2 - (i-1)^2, & \text{for } 1 \leq i \leq k-1 \\
    n^2- i^2 + 2(d-1)(n-i), & \text{for } k \leq i \leq n-1 
\end{cases}
\]
\end{theorem}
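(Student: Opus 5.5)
The plan follows the method of the paper's proof-method subsection. The proposed vector $[t]$ (extended to the stem vertices by Theorem \ref{thm_stems}) satisfies every ASUA equation of Theorem \ref{thm_mu}. Since $I-Q$ is invertible, that system has a unique solution, so the proposed vector is the true one. Write $f(i)=n^2-i^2+2(d-1)(n-i)$ for the second branch. The first branch is then $h(i)=f(k)+(k-1)^2-(i-1)^2$, with $h(k)=f(k)$ and $f(n)=0=t(v_n)$. I assume $k\le n-1$.

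\textbf{Step 1: reduce to spine equations.} Each stem $S_i$ hanging from $v_k$ is handled by Theorem \ref{thm_stems}. Its vertices' ASUAs are $t(v_k)$ plus explicit constants, so the ASUA equations of stem vertices hold automatically once we set those values. If $k\ge 2$, the vertices $v_1,\ldots,v_{k-1}$ also form a stem of length $k-1$ attached to $v_k$. Theorem \ref{thm_stems} then gives $t(v_i)=t(v_k)+(k-1)^2-(i-1)^2$ for $1\le i\le k-1$, which is exactly $h(i)$ once $t(v_k)=f(k)$. So the first case reduces to the second at $i=k$, and it remains to check that $f$ satisfies the equations at $v_k,\ldots,v_{n-1}$.

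\textbf{Step 2: interior spine vertices $k<i\le n-1$.} Here $d(v_i)=2$, so Theorem \ref{thm_mu} requires $t(v_i)=\tfrac12(t(v_{i-1})+t(v_{i+1}))+1$, using $t(v_n)=0=f(n)$ when $i=n-1$. Since $f$ is quadratic in $i$ with leading coefficient $-1$, we have $\tfrac12(f(i-1)+f(i+1))=f(i)-1$, so the equation holds.

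\textbf{Step 3: the branch vertex $v_k$, the main check.} For $k\ge2$, Corollary \ref{cor_sd4} with $x=v_{k-1}$, $y=v_{k+1}$ requires $t(v_k)=\tfrac12(t(v_{k-1})+t(v_{k+1}))+d+1$. Substituting $t(v_{k-1})=f(k)+2k-3$ and $f(k+1)=f(k)-(2k+1)-2(d-1)$, the average equals $f(k)-d-1$. Adding $d+1$ returns $f(k)$, as required.

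The case $k=1$ needs separate treatment, since $v_k$ then has only the spine neighbor $v_2$ and Corollary \ref{cor_sd4} does not apply directly. I will redo that corollary's computation from Theorem \ref{thm_mu} with only one spine neighbor. This gives $t(v_1)=t(v_2)+2d+1$, and indeed $f(1)-f(2)=3+2(d-1)=2d+1$.

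The bookkeeping of the constant $(k-1)^2$ at the junction $v_{k-1}$, $v_k$ is where errors are most likely. The statements of Theorems \ref{thm_sd2} and \ref{thm_sd3} show $(k+1)^2$ there, which I believe is a typo for $(k-1)^2$. I would double-check this on a small instance by direct computation of $N\cdot[1]$. Once the checks agree, uniqueness completes the proof.
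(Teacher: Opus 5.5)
Your proof is correct and is essentially the paper's argument. Like the paper, you verify the candidate against the ASUA equations, use Corollary~\ref{cor_sd4} at the branch vertex $v_k$ (your computation there matches the paper's), and conclude by uniqueness of the solution of $(I-Q)[t]=[1]$. Where you differ is small but cleaner: treating $v_1,\dots,v_{k-1}$ as a stem attached to $v_k$ lets Theorem~\ref{thm_stems} replace the paper's three separate checks ($i=1$, $2\le i\le k-2$, $i=k-1$), and your separate $k=1$ check covers a case the paper's proof tacitly excludes; you are also right that the $(k+1)^2$ in Theorems~\ref{thm_sd2} and~\ref{thm_sd3} should be $(k-1)^2$.
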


\begin{proof}
    Let 
    \[
f(i) = 
\begin{cases}
    n^2- k^2 + 2(d-1)(n-k) + (k-1)^2 - (i-1)^2, & \text{for } 1 \leq i \leq k-1 \\
    n^2- i^2 + 2(d-1)(n-i), & \text{for } k \leq i \leq n-1. 
\end{cases}
\]

\noindent As before, we will show that $t(v_i) = f(i)$ for all $i$, $1 \leq i \leq n-1$ through use of ASUA equations providing the unique solution to $N^{-1}\cdot[t]=[1]$, and we have several cases: $i = 1$, $i = n-1$, $1 \leq i \leq k-2$, $k+1 \leq i \leq n-1$, $i = k-1$, and $i = k$. In particular, from Theorem \ref{thm_mu} and Corollary \ref{cor_sd4}, we will rely on the following ASUA equations:  

\begin{align*}
    t(v_1) & = t(v_2) + 1 \\
    t(v_{n-1}) & = \dfrac{1}{2}t(v_{n-1}) \\
    t(v_i) & =  \dfrac{1}{2}(t(v_{i-1}) + t(v_{i+1}))  + 1 \text{ for } 2 \leq i \leq n-2 \text{ and } n \neq k \\
    t(v_k) & = \dfrac{1}{2}(t(v_{k-1}) + t(v_{k+1})) + (d+1).
\end{align*}


\noindent Let $a = n^2 - k^2 + 2(d-1)(n-k) + (k-1)^2$, and suppose $i = 1$. Then $t(v_1) = t(v_2) + 1.$ If $k \neq 2$, then $t(v_1) = f(2) + 1 = a - (2-1)^2 + 1 = a = f(1)$. If $k = 2$, then $t(v_1) = n^2 - 2^2 + 2(d-1)(n-2) + 1 = f(1)$. 

\noindent Let $i = n-1$. Then $t(v_{n-1}) = \dfrac{1}{2}f(n-2) + 1$. If $k \neq n-1$, then 

\begin{align*}
    t(v_{n-1}) & = \dfrac{1}{2}f(n-2) + 1 \\
        & = \dfrac{1}{2}(n^2 - (n-2)^2 + 2(d-1)(n-(n-2)) + 1 \\
        & = 2n - 1 + 2(d-1) \\
        & = n^2 - (n-1)^2 + 2(d-1)(n-(n-1)) = f(n-1).
\end{align*}

\noindent Likewise if $i = n-1$ and $k = n-1$, then

\begin{align*}
    t(v_{n-1}) & = \dfrac{1}{2}f(n-2) + 1 \\
        & = \dfrac{1}{2}(n^2 - (n-2)^2 + 2(d-1)(n-(n-2)) + (n-2)^2 - (n-2)^2) + 1 \\
        & = 2n - 1 + 2(d-1) \\
        & = n^2 - (n-1)^2 + 2(d-1)(n-(n-1)) = f(n-1).
\end{align*}

\noindent Let $i \in \{2, \ldots, k-2\}$. Then $t(v_i) =  \dfrac{1}{2}(f(i-1) + f(i+1))  + 1$. Thus

\begin{align*}
    t(v_i) & =  \dfrac{1}{2}(f(i-1) + f(i+1))  + 1 \\
     & = \dfrac{1}{2}(a - (i-2)^2 + a - i^2) + 1 \\
     & = a - (i-1)^2 = f(i).
\end{align*}

\noindent Let $i \in \{k + 1, \ldots, n-2\}$. Then $t(v_i) =  \dfrac{1}{2}(f(i-1)+ f(i+1))  + 1$. Thus

\begin{align*}
    t(v_i) & =  \dfrac{1}{2}(f(i-1) + f(i+1))  + 1 \\
     & = \dfrac{1}{2}(n^2 - (i-1)^2 + 2(d-1)(n-(i-1)) +  n^2 - (i+1)^2 + 2(d-1)(n-(i+1))) + 1 \\
     & = n^2 - i^2 - 1+ 2(d-1)n - \dfrac{1}{2}(2(d-1)(i-1) + 2(d-1)(i+1)) + 1 \\
     & = n^2 - i^2 + 2(d-1)(n-i) = f(i).
\end{align*}

\noindent Let $i = k-1$. Then $t(v_{k-1}) = \dfrac{1}{2}(f(k-2) + f(k))  + 1$. Note that this ASUA equation requires the left and right pieces of $f(i)$. Thus,  

\begin{align*}
    t(v_{k-1}) & =  \dfrac{1}{2}(f(k-2) + f(k))  + 1 \\
     & = \dfrac{1}{2}(n^2 - k^2 + 2(d-1)(n-k) +  (k-1)^2 - (k-3)^2 +  n^2 - k^2 + 2(d-1)(n-k)) + 1 \\
     & = n^2 - k^2 + 2(d-1)(n-k) + 2k - 4 + 1 \\
     & = n^2 - k^2 + 2(d-1)(n-k) + k^2 - 2k + 1 - k^2+ 4k - 4\\
     & = n^2 - k^2 + 2(d-1)(n-k) + (k-1)^2 - (k-2)^2 = f(k-1).
\end{align*}

\noindent Lastly, let $i = k$. Then $t(v_{k}) = \dfrac{1}{2}(f(k-1) + f(k+1))  + (d+1)$. Thus  

\begin{align*}
    t(v_{k}) & =  \dfrac{1}{2}(f(k-1) + f(k+1))  + (d+1) \\
     & = \dfrac{1}{2}(n^2 - k^2 + 2(d-1)(n-k) +  (k-1)^2 - (k-2)^2 +  n^2 - k^2 + 2(d-1)(n-(k+1))) + (d+1) \\
     & = n^2 - k^2 + 2(d-1)(n-k) -(d-1) -2 + (d + 1) \\
     & = n^2 - k^2 + 2(d-1)(n-k)  = f(k).
\end{align*}

\noindent Thus $t(v_i) = f(i)$ for all $i$, $1 \leq i \leq n-1$.

\end{proof}

\section{Other Questions to Consider}

\noindent ASUA equations provide valuable insights in calculating average steps until absorption along a random walk. In particular, they focus on local structure and avoid the need to calculate inverses of arbitrarily large matrices. There are several variations to our setup that may be considered. We allowed steps in the walk to be equally weighted, though, by adding parallel edges, one could easily adjust probabilities to fit uneven weights on the edges. If one desired multiple absorption points on a graph, say $x$ and $y$, we note ASUAs for this graph are the same as a new graph in which an edge, $xy$, is added to the graph and then contracted.

\noindent For graph $G$, let $t_{\sigma}(G, u) = \sum_{V} t(v)$ be the sum total of ASUAs of vertices of $G$ with absorbing vertex $u$, and let $t'(G, v, u)$ be the average steps needed to traverse from $v$ to $u$ and back to $v$. Besides classifying ASUAs on other classes of graphs, we are interested in looking for sharp bounds for $t_{\sigma}$ and $t'$ for classes of graphs. For trees, we hypothesize that $t_{\sigma}(S_n) \leq T \leq t_{\sigma}(P_n)$ and $t'(S_n) \leq T \leq t'(P_n)$ for any tree $T$ of order $n$, where $S_n$ is the star on $n$ vertices.



    \nocite{*}
    \footnotesize{\bibliographystyle{plain}\bibliography{references}}
    
\end{document}